\documentclass[letterpaper,10pt,conference]{ieeeconf}

\IEEEoverridecommandlockouts
\usepackage{graphics}
\usepackage{epsfig}
\usepackage{mathptmx}
\usepackage{times}
\usepackage{amsmath}
\usepackage{amssymb}
\usepackage[T1]{fontenc}
\usepackage{lmodern}
\usepackage{mathtools}
\usepackage{bm}
\usepackage{xcolor}

\newtheorem{theorem}{Theorem}
\newtheorem{lemma}{Lemma}
\newtheorem{proposition}{Proposition}

\newtheorem{definition}{Definition}

\newcommand{\mass}{\operatorname{mass}}
\newcommand{\spt}{\operatorname{spt}}
\newcommand{\defeq}{:=}

\title{\LARGE \bf
On the Peak Estimation of Scalar Polynomial Systems via Integrability
}

\author{Karol\'ina Sehnalov\'a$^{1}$%
\thanks{This work was funded by the European Union/M\v{S}MT \v{C}R under the
ROBOPROX project (reg.~no.~CZ.02.01.01/00/22 008/0004590) and by the Grant
Agency of the Czech Technical University in Prague (grant
No.~SGS25/145/OHK3/3T/13).}%
\thanks{$^{1}$Faculty of Electrical Engineering, Czech Technical University
in Prague, Technick\'a 2, CZ-16627 Prague, Czechia.
{\tt\small sehnakar@fel.cvut.cz}}%
}
\begin{document}

\maketitle
\thispagestyle{empty}
\pagestyle{empty}

%%%%%%%%%%%%%%%%%%%%%%%%%%%%%%%%%%%%%%%%%%%%%%%%%%%%%%%%%%%%%%%%%%%%%%%%%%%%%%%%
\begin{abstract}
We develop a convergent occupation measure framework for peak estimation of scalar polynomial dynamical systems with no a priori state bound. The key observation is that scalar trajectories satisfy a uniform weighted integrability bound whose exponent precisely compensates for the singularity introduced by polynomial compactification. This yields a finite measure formulation with polynomial data. We establish a superposition principle for the resulting weighted continuity equation, prove no relaxation gap and establish convergence of the associated moment-sum-of-squares hierarchy.
\end{abstract}

%%%%%%%%%%%%%%%%%%%%%%%%%%%%%%%%%%%%%%%%%%%%%%%%%%%%%%%%%%%%%%%%%%%%%%%%%%%%%%%%
\section{Introduction}
\label{sec:introduction}

Estimating extreme values attained along trajectories of dynamical systems is an important problem in nonlinear systems analysis, with applications in safety verification, reachability analysis, and worst-case performance assessment. In particular, estimating the maximum magnitude attained by trajectories originating from a prescribed set of initial conditions is relevant for verifying the behavior of closed-loop systems. A powerful approach to such problems is based on convex optimization, using either auxiliary functions \cite{fantuzzi_bounding_2020} or occupation measures
\cite{miller_peak_2024,miller2021,lasserre_nonlinear_2008}. The latter describe the evolution of ensembles of trajectories through positive measures, allowing nonlinear dynamical constraints to be expressed as linear equations and subsequently approximated using the moment-sum-of-squares (moment-SOS) hierarchy \cite{lasserre_positive_2010}.

A major difficulty arises when the state space is unbounded, as polynomial systems may exhibit trajectories escaping to infinity in finite time. Such behavior can be studied using Poincar\'e-type compactifications \cite{matsue_blow-up_2018,elias_critical_2006}, which transform the unbounded state space into a compact domain. In terms of polynomial optimization, i.e. without differential equations, this approach has been investigated in \cite{huang_homogenization_2022}. However, compactification introduces additional difficulties when combined with occupation measure methods. In particular, the transformation needed to recover polynomial dynamics may lead to measures with infinite mass, preventing the direct application of standard moment-SOS convergence results. A general framework for peak estimation over infinite time horizons and unbounded state spaces was recently developed in \cite{sehnalova2026infinite}. Nevertheless, convergence of the resulting numerical approximations has not been established in this general setting.

In this paper, we investigate the scalar case, which reveals a remarkable structural property that allows us to overcome this difficulty. Although scalar autonomous dynamics have a relatively simple qualitative structure and their finite time blow up behavior is well understood \cite{ishiwata_blowup_2022}, their compactification possesses a crucial advantage over the general multidimensional case. For polynomial dynamics of degree \(d\geq 3\), the standard compactification generally requires a time reparameterization involving the factor \(w^{-(d-1)}\), where \(w\) approaches zero as the state tends to infinity. This factor need not be integrable along trajectories exhibiting finite time blow up. In contrast, for scalar systems, an additional algebraic cancellation reduces the required factor to \(w^{-(d-2)}\). Remarkably, $d-2$ is precisely the largest integer exponent for which integrability is guaranteed along finite time escaping trajectories of scalar polynomial systems. Thus, a reduction by a single power makes the difference between a potentially infinite measure and one whose mass can be uniformly bounded.
\subsection{Contribution}
The main contribution of this paper is to identify and exploit this precise correspondence between the algebraic structure of the compactified dynamics and the integrability properties of scalar polynomial trajectories. We prove that the required integrability holds uniformly over all admissible trajectories, including those approaching finite time escape. This allows us to construct a finite mass occupation measure relaxation of the peak estimation problem on a compact semialgebraic domain. We further establish a superposition principle for the weighted continuity equation, representing feasible measures as superpositions of stopped trajectories, including those reaching infinity in finite time. This establishes the absence of a relaxation gap and allows us to
prove that the associated moment-SOS hierarchy yields a monotonically convergent sequence of upper bounds on the true peak value. These results identify a class of polynomial dynamical systems for which convergence can be rigorously guaranteed without assuming a priori bounds on the state variables or excluding finite time blow up. More broadly, they demonstrate that the possibility of finite time escape does not itself prevent convergence of moment-SOS methods; the crucial issue is whether the compactification can be made compatible with the integrability properties of the underlying dynamics.
\subsection{Outline}
The remainder of the paper is organized as follows. Section~\ref{sec:problem_statement} introduces the peak estimation problem and establishes the key integrability result. Section~\ref{sec:compactification} presents the compactification and occupation measure formulation. Section~\ref{sec:no_relaxation_gap} establishes the superposition principle and the absence of a relaxation gap. Section~\ref{sec:convergence} proves convergence of the moment-SOS hierarchy, followed by a numerical illustration.

\subsection{Notation}

We denote by $\mathcal M_+(\mathcal A)$ the set of nonnegative Radon measures supported on $\mathcal A$. The Dirac measure at a point $a$ is denoted by $\delta_a$. For a measure $\mu$, we denote its total mass by $\mass(\mu)$ and its support by $\spt(\mu)$. Given a measurable map $F:\mathcal A\to\mathcal B$ and a measure $\mu$ on $\mathcal A$, the pushforward measure $F_\#\mu$ is defined by
\(
(F_\#\mu)(\mathcal D)=\mu(F^{-1}(\mathcal D))
\)
for every Borel set $\mathcal D\subset\mathcal B$. Finally, $\mathbb R[x]$ denotes the ring of real polynomials in $x$.

\section{Problem statement}
\label{sec:problem_statement}

We consider the scalar autonomous polynomial dynamical system
\begin{equation}
\label{eq:system}
    \dot{x}(t)=f(x(t)),
\end{equation}
where \(x\in\mathbb R\) and \(f\in\mathbb R[x]\) is a polynomial of degree \(d\). Given an initial condition \(x_0\in\mathbb R\), we denote by \(x(\cdot;x_0)\) the corresponding maximal solution of \eqref{eq:system} and by \(t_{\max}(x_0)\in(0,\infty]\) its maximal existence time. Since the dynamics is polynomial, a solution may escape to infinity in finite time.

Let \(T>0\) be a fixed time horizon and let the set of initial conditions be the compact interval
\begin{equation}
\label{eq:initial_set}
    \mathcal X_0=[x_{\min},x_{\max}]\subset\mathbb R.
\end{equation}
We would like to estimate the largest magnitude reached by a trajectory originating in \(\mathcal X_0\) before the end of the time horizon or its maximal existence time. The problem is formulated as
\begin{align}
\label{problem_state_original}
    \bar p^*
    ={}&
    \sup_{x_0,\tau}
    |x(\tau;x_0)|
    \\
    \mathrm{s.t.}\quad
    &x_0\in\mathcal X_0,
    \qquad
    0\leq\tau\leq T,
    \qquad
    \tau<t_{\max}(x_0).
    \notag
\end{align}
Notice that \(\bar p^*\) may be finite or infinite.

To obtain a bounded objective, we therefore fix a positive even integer \(q\) and use the transformation \(
    h(x)
    :=
    \frac{x^q}{1+x^q},
\)
which is strictly increasing as a function of \(|x|\). We consider the equivalent problem
\begin{align}
\label{problem_state_bounded_cost}
    p^*
    ={}&
    \sup_{x_0,\tau}
    \frac{x(\tau;x_0)^q}
         {1+x(\tau;x_0)^q}
    \\
    \mathrm{s.t.}\quad
    &x_0\in\mathcal X_0,
    \qquad
    0\leq\tau\leq T,
    \qquad
    \tau<t_{\max}(x_0).
    \notag
\end{align}
Optimal values of \eqref{problem_state_original} and \eqref{problem_state_bounded_cost} are related by
\begin{equation}
\label{eq:weight_exponent}
    \bar p^*
    =
    \left(\frac{p^*}{1-p^*}\right)^{1/q}
\end{equation}
whenever \(p^*<1\). If \(p^*=1\), then \(\bar p^*=+\infty\). Thus, unbounded peak value is represented by the finite optimal value \(p^*=1\) in problem~\eqref{problem_state_bounded_cost}. For later use, we set \( \rho:=\max\{d-2,0\}. \)

For scalar polynomial systems of degree \(d\geq3\), the function \(|x(t)|^{d-2}\) remains integrable even when the solution escapes to infinity in finite time. We first establish a uniform bound of this type. If \(d\geq3\), choose \(R\geq1\) sufficiently large and \(c>0\) such that
\begin{equation}
\label{eq:polynomial_growth_bound}
    \mathcal X_0\subset[-R,R],
    \qquad
    |f(x)|\geq c|x|^d
    \quad\text{for }|x|\geq R,
\end{equation}
and define
\begin{equation}
\label{eq:explicit_weighted_mass_bound}
    C
    :=
    \begin{cases}
        T, & d\leq2,\\[1mm]
        T(1+R^q)^{(d-2)/q}
        +\dfrac{2^{(d-2)/q}}{cR}, & d\geq3.
    \end{cases}
\end{equation}

\begin{lemma}
\label{lem:uniform_integrability}
Given $\tau < t_{\max}(x_0)$, every trajectory in problem~\eqref{problem_state_bounded_cost} satisfies
\begin{equation}
\label{eq:uniform_integrability}
    \int_0^\tau
    \left(1+|x(t;x_0)|^q\right)^{\rho/q}dt
    \leq C
\end{equation}
\end{lemma}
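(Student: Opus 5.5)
The case \(d\leq 2\) is immediate: then \(\rho=0\), the integrand equals \(1\), and the integral is \(\tau\leq T=C\). For \(d\geq3\) we have \(\rho=d-2\). The plan is to split the time interval \([0,\tau]\) according to whether \(|x(t)|\leq R\) or \(|x(t)|>R\). On the first set we bound the integrand pointwise by \((1+R^q)^{(d-2)/q}\), which contributes at most \(T(1+R^q)^{(d-2)/q}\). On the second set we use \(1+|x|^q\leq 2|x|^q\) (valid since \(|x|\geq R\geq1\)), so the integrand is at most \(2^{(d-2)/q}|x|^{d-2}\). The remaining task is to show
\[
\int_{\{t\in[0,\tau]:\,|x(t)|>R\}}|x(t)|^{d-2}\,dt\leq \frac{1}{cR}.
\]

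For this we exploit scalar monotonicity. Since \(x_0\in\mathcal X_0\subset[-R,R]\), and \(f\) has no zeros on \(\{|x|\geq R\}\) (because \(|f|\geq c|x|^d>0\) there), \(f\) has a constant sign on \([R,\infty)\) and on \((-\infty,-R]\). If the trajectory ever exits \([-R,R]\), say through \(R\), then \(f(R)>0\); by uniqueness, solutions of a scalar autonomous ODE are monotone, so \(x\) is strictly increasing from that moment on and never returns. Hence the set \(\{|x(t)|>R\}\) is a single interval \((t_1,\tau]\), and \(x\) is monotone on it (the case of exit through \(-R\) is symmetric).

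On \((t_1,\tau]\) we change variables \(s=x(t)\), \(ds=f(x)\,dt\), to get
\[
\int_{t_1}^{\tau}|x|^{d-2}\,dt=\int_R^{x(\tau)}\frac{s^{d-2}}{|f(s)|}\,ds\leq\frac1c\int_R^{\infty}s^{-2}\,ds=\frac{1}{cR}.
\]
Multiplying by \(2^{(d-2)/q}\) and adding the bound from the first set yields \(C\). This bound is uniform in \(x_0\) and \(\tau\), including when \(\tau\) approaches \(t_{\max}(x_0)\).

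The main care point is the monotonicity and single-exit argument, in particular ensuring that the trajectory cannot re-enter \([-R,R]\). I would argue this directly from the sign of \(f\) on \(|x|\geq R\) together with uniqueness of solutions: a scalar trajectory cannot cross an equilibrium, and it cannot reverse direction without passing through a zero of \(f\). Once this is in place, the change of variables is legitimate because \(f\neq0\) on the integration range.
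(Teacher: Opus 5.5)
Your proof is correct and follows essentially the same route as the paper. You split according to whether $|x(t)|\leq R$, bound the first part by $T(1+R^q)^{(d-2)/q}$, and on the escaping part use monotonicity of $r=|x(t)|$, the change of variables $dr=|f(x)|\,dt$ with $|f|\geq c\,r^d$, and the bound $(1+r^q)^{(d-2)/q}\leq 2^{(d-2)/q}r^{d-2}$. Your explicit single-exit argument, in which the sign of $f$ at $\pm R$ forces monotone escape with no re-entry, just spells out what the paper compresses into ``once a trajectory leaves this interval in an outward direction.''
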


\begin{proof}
If \(d\leq2\), then \(\rho=0\) and the integral is at most \(T\). Suppose that \(d\geq3\). The contribution accumulated while \(|x(t)|\leq R\) is at most \(T(1+R^q)^{(d-2)/q}\). By \eqref{eq:polynomial_growth_bound}, \(f\) has no zero outside \([-R,R]\). Hence, once a trajectory leaves this interval in an outward direction, \(r(t):=|x(t)|\) is increasing and \(\dot r(t)=|f(x(t))|\). Therefore,
\begin{align}
\int_0^\tau
    &\left(1+|x(t;x_0)|^q\right)^{\rho/q}dt \\&\leq
    T(1+R^q)^{(d-2)/q} + \int_R^\infty
    (1+r^q)^{(d-2)/q}\frac{dr}{cr^d}. \notag
\end{align}
Since \(R\geq1\), we have
\[
    (1+r^q)^{(d-2)/q}
    \leq 2^{(d-2)/q}r^{d-2}
    \qquad\text{for }r\geq R.
\]
Consequently,
\[
    \int_R^\infty
    (1+r^q)^{(d-2)/q}\frac{dr}{cr^d}
    \leq
    \frac{2^{(d-2)/q}}{c}
    \int_R^\infty\frac{dr}{r^2}
    =
    \frac{2^{(d-2)/q}}{cR},
\]
which proves \eqref{eq:uniform_integrability}.
\end{proof}

\section{Compactification and weighted occupation measures}
\label{sec:compactification}
\label{sec:occupation_measures}

We next compactify the state space in the same fashion as in \cite{sehnalova2025unbounded,sehnalova2026infinite}, using the transformation
\begin{equation}
\label{eq:compactification_map}
    z
    :=
    \frac{x}{(1+x^q)^{1/q}},
    \qquad
    w
    :=
    \frac{1}{(1+x^q)^{1/q}}.
\end{equation}
Since \(q\) is even, the transformed variables satisfy
\(
    z^q+w^q=1,
    w>0,
\)
and the inverse transformation is
\(
    x=\frac{z}{w}.
\)

The closure of the compactified state space is
\begin{equation}
\label{eq:compact_state_space}
    \mathcal B_q
    :=
    \left\{
        (z,w)\in\mathbb R^2
        \ : \
        z^q+w^q=1,\quad w\geq0
    \right\}.
\end{equation}
The two points
\(
    (1,0),
    (-1,0)
\)
represent \(+\infty\) and \(-\infty\), respectively. Moreover, the bounded objective becomes
\begin{equation}
\label{eq:compactified_objective}
    \frac{x^q}{1+x^q}=z^q.
\end{equation}

The compactified initial set is
\begin{equation}
\label{eq:compactified_initial_set}
    \mathcal Y_0
    :=
    \left\{
        (z,w)\in\mathcal B_q
        : 
        z-x_{\min}w\geq0,
        \quad
        x_{\max}w-z\geq0
    \right\},
\end{equation}
which is precisely the image of \(\mathcal X_0\) under
\eqref{eq:compactification_map}.

Let
\(
    \widetilde f(z,w)
    :=
    w^d f\left(\frac{z}{w}\right)
\)
denote the homogenization of \(f\). The compactified dynamics can then be written as
\begin{align}
\label{eq:compactified_dynamics_homogeneous}
    \dot z
    =
    w^{q+1-d}\widetilde f(z,w), \qquad
    \dot w
    =
    -z^{q-1}w^{2-d}\widetilde f(z,w).
\end{align}

We use so-called occupation measures, which enable us to estimate the performance of an ensemble of trajectories simultaneously and represent the dynamics as a linear constraint in the space of positive Radon measures.

\begin{definition}
\label{def:compactified_measures}
Given \(x_0\in\mathcal X_0\) and
\(
    0\leq\tau\leq T\) with 
    \(\tau\leq t_{\max}(x_0),
\)
let \( \gamma(t) \defeq (z(t),w(t))\) be the compactification
\eqref{eq:compactification_map} of \(x(t;x_0)\), extended continuously to \(t=\tau\) when \(\tau=t_{\max}(x_0)\). The \textit{initial measure} \(\eta_0\), \textit{weighted occupation measure} \(\nu\), and \textit{peak measure} \(\eta_T\) associated with this trajectory and \textit{stopping time} $\tau$ are defined by
\begin{gather*}
    \eta_0
    :=
    \delta_{(0,z(0),w(0))},\qquad \eta_T
    :=
    \delta_{(\tau,z(\tau),w(\tau))},\\
    \nu(\mathcal A)
    :=
    \int_0^\tau w(t)^{-\rho}
        \mathbf 1_{\mathcal A}(t,z(t),w(t))\,dt,\\
\end{gather*}
for every Borel set \(\mathcal A\subset \mathcal K \defeq[0,T]\times\mathcal B_q\), where \(\mathbf{1}_{\mathcal A}\) is the indicator function of \(\mathcal A\). Moreover, a \textit{stopped solution} is a pair $(\tau,\gamma)$.
\end{definition}

For every continuously differentiable function \(\phi\) defined on a neighborhood of \(\mathcal K\), the chain rule along \eqref{eq:compactified_dynamics_homogeneous}, integrated from \(0\) to \(\tau\), gives the following weighted continuity equation
\begin{align}
\label{eq:compactified_liouville}
    \int_{\mathcal K}
    \left(\frac{\partial \phi}{\partial t}w^{\rho}+
         \frac{\partial\phi}{\partial z}F_z +
         \frac{\partial\phi}{\partial w}F_w
    \right)d\nu
    =\int_{\mathcal K}\phi\,d\eta_T - \int_{\mathcal K}\phi\,d\eta_0,
\end{align}
where
\begin{align*}  
    F_z(z,w)
    &:=
    w^{q+1-d+\rho}\widetilde f(z,w), \\
    F_w(z,w)
    &:=
    -z^{q-1}w^{2-d+\rho}\widetilde f(z,w).
\end{align*}
Observe that the weight \(w^{-\rho}\) removes the negative powers of \(w\) from the continuity equation associated with \eqref{eq:compactified_dynamics_homogeneous}, hence, all coefficients in \eqref{eq:compactified_liouville} are polynomial. Note in particular that $F_z, F_w$ are polynomial. This measure reweighting has already been used in \cite{sehnalova2025unbounded}. If \(d\geq3\), then \(\rho=d-2\), and the three coefficients become
\(w^{d-2}\), \(w^{q-1}\widetilde f\), and
\(-z^{q-1}\widetilde f\). If \(d\leq2\), then \(\rho=0\), while
\(q+1-d\) and \(2-d\) are nonnegative.

The mass of every weighted occupation measure generated by an admissible trajectory satisfies
\begin{equation}
\label{eq:uniform_weighted_mass_bound}
    \mass(\nu)\leq C
\end{equation}
because Definition~\ref{def:compactified_measures} and
\eqref{eq:compactification_map} give
\begin{align}
    \mass(\nu)
    &=
    \int_0^\tau w(t)^{-\rho}\,dt
    =
    \int_0^\tau
    \left(1+|x(t)|^q\right)^{\rho/q}dt,
\end{align}
which is bounded by \(C\) according to Lemma~\ref{lem:uniform_integrability}.

We use the same notation $(\eta_0, \nu, \eta_T)$ for arbitrary positive Radon measures satisfying \eqref{eq:compactified_liouville}, with
\begin{gather}\label{eq:support_support_constraints}
        \eta_0\in\mathcal M_+(\{0\}\times\mathcal Y_0),
    \quad
    \nu,\eta_T\in\mathcal M_+(\mathcal K),
    \quad \\
    \mass(\eta_0)=1,
    \quad
    \mass(\nu)\leq C.\notag
\end{gather}

\section{Superposition and no relaxation gap}
\label{sec:no_relaxation_gap}

In this section, we establish a superposition principle tailored to the compactified scalar dynamics, which says that every solution to the weighted continuity equation \eqref{eq:compactified_liouville} corresponds to a superposition of trajectories of the original system.

% SUPERPOSITION---------------------------

We associate with \eqref{eq:compactified_liouville} the artificial-time system
\begin{equation}
\label{eq:compactified_artificial_time_system}
    \frac{dt}{d\theta}=w^\rho,
    \qquad
    \frac{dz}{d\theta}=F_z(z,w),
    \qquad
    \frac{dw}{d\theta}=F_w(z,w),
\end{equation}
where $\theta$ is the artificial time. Notice that \(d\theta=w^{-\rho}dt\) whenever \(w>0\), consistently with the definition of the weighted occupation measure.

\begin{proposition}[Superposition principle]
\label{prop:compactified_superposition}
Let \(\eta_0,\nu,\eta_T\) satisfy the support and mass constraints \eqref{eq:support_support_constraints} and the weighted continuity equation \eqref{eq:compactified_liouville}. Then there exists a probability measure \(\boldsymbol\eta\) on stopped solutions \((\tau,\gamma)\) of the compactified dynamics, where \(0\leq\tau\leq T\) and \(\gamma:[0,\tau]\to\mathcal B_q\) is parametrized by the original time \(t\).

Defining
\[
    e_0(\tau,\gamma):=(0,\gamma(0)),
    \qquad
    e_T(\tau,\gamma):=(\tau,\gamma(\tau)),
\]
we have
\[
    \eta_0=(e_0)_\#\boldsymbol\eta,
    \qquad
    \eta_T=(e_T)_\#\boldsymbol\eta,
\]
and, for every bounded Borel function \(h\),
\begin{equation}
\label{eq:compactified_superposition_formula}
    \int_{\mathcal K} h(t,z,w)\,d\nu
    =
    \int
    \left(
        \int_0^\tau
            h\bigl(t,\gamma(t)\bigr)
            w_\gamma(t)^{-\rho}\,dt
    \right)
    d\boldsymbol\eta(\tau,\gamma),
\end{equation}
where \(w_\gamma\) is the \(w\)-component of \(\gamma\). The represented trajectories are stopped either at a point with \(w>0\) or upon first reaching \(w=0\).
\end{proposition}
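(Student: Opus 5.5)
Our plan is to reduce the statement to a standard superposition principle for a continuity equation with a bounded, autonomous, polynomial (hence locally Lipschitz) vector field on a compact set, namely the artificial-time system \eqref{eq:compactified_artificial_time_system}.

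\textbf{Step 1: rewrite as a stopped-process Liouville equation.} Set $V(t,z,w):=(w^\rho,F_z,F_w)$, a polynomial vector field on $\mathbb R^3$. Equation \eqref{eq:compactified_liouville} then reads $\int V\cdot\nabla\phi\,d\nu=\int\phi\,d\eta_T-\int\phi\,d\eta_0$ for all $C^1$ test functions $\phi$. Since $\mass(\nu)\le C<\infty$ and $V$ is bounded on the compact set $\mathcal K$, this is exactly the form treated by the superposition theorems for ``stopped'' or ``exit-time'' occupation measures. Examples are the Ambrosio--Smirnov type decomposition used in the peak-estimation literature, e.g.\ \cite{miller_peak_2024}, or the Bernard/Ambrosio superposition for $\operatorname{div}(V\nu)=\eta_0-\eta_T$. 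Applying it yields a probability measure $\tilde{\boldsymbol\eta}$ on pairs $(\Theta,\xi)$, where $\xi:[0,\Theta]\to\mathcal K$ is an absolutely continuous integral curve of $V$ in artificial time. It satisfies $\eta_0=(\xi(0))_\#\tilde{\boldsymbol\eta}$ and $\eta_T=(\xi(\Theta))_\#\tilde{\boldsymbol\eta}$, and $\int h\,d\nu=\int\!\int_0^\Theta h(\xi(\theta))\,d\theta\,d\tilde{\boldsymbol\eta}$. Invariance of $\mathcal K$ needs a check: $\frac{d}{d\theta}(z^q+w^q)=qz^{q-1}F_z+qw^{q-1}F_w=0$ by construction, and $w\ge0$ is preserved because $w=0$ is reached only at the equilibria $(\pm1,0)$ described below. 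The constraint $t\le T$ comes from the supports of $\nu$ and $\eta_T$.

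\textbf{Step 2: structure of the curves.} Since $V$ is locally Lipschitz, each integral curve is the unique flow line from its initial point, so $\xi$ is determined by $\xi(0)$ and $\Theta$. For $d\ge3$ we have $\rho=d-2$, and at $w=0$, $z=\pm1$ the field is $V=(0,0,-z^{q-1}\widetilde f(\pm1,0))$. This component is not zero, so the boundary points are not equilibria in $(z,w)$; the curve would cross into $w<0$. We must therefore argue that curves are stopped upon first reaching $w=0$. Every curve starts in $\{0\}\times\mathcal Y_0$, where $w>0$. The portion of a curve after first hitting $w=0$ would leave $\mathcal B_q$ (where $w\ge0$) unless it stops there, and $\nu$ is supported in $\mathcal K$. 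So $\tilde{\boldsymbol\eta}$-a.e.\ curve stays in $w\ge0$, and it can only touch $w=0$ at its final time. Tangential touching would require the $w$-derivative to vanish, which happens only where $\widetilde f(\pm1,0)=0$, i.e.\ when the leading coefficient is zero; that is excluded by $\deg f=d$. For $d\le2$, note that $F_w=-z^{q-1}w^{2-d}\widetilde f$ vanishes at $w=0$ when $d<2$, so $\{w=0\}$ is invariant and unreachable in finite time from $w>0$ by uniqueness. When $d=2$, crossing is handled as in the case $d\ge3$.

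\textbf{Step 3: change back to original time.} Along each curve with $w>0$ on $[0,\Theta)$, define $t(\theta)=t(0)+\int_0^\theta w^\rho$. This is strictly increasing, and $t(0)=0$ because $\eta_0$ is supported at $t=0$. Set $\tau:=t(\Theta)\le T$ and $\gamma(t(\theta)):=(z,w)(\theta)$. For $w>0$, \eqref{eq:compactified_artificial_time_system} reduces to \eqref{eq:compactified_dynamics_homogeneous}, so $\gamma$ is the compactification of a solution $x(\cdot;x_0)$ with $x_0=z(0)/w(0)\in\mathcal X_0$. If $w(\Theta)=0$, then $\tau=t_{\max}(x_0)$ and $\gamma$ is the continuous extension from Definition~\ref{def:compactified_measures}. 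The change of variables $d\theta=w^{-\rho}dt$ turns the occupation formula into \eqref{eq:compactified_superposition_formula}. Taking $\boldsymbol\eta$ as the pushforward of $\tilde{\boldsymbol\eta}$ under this measurable reparametrization gives the endpoint identities.

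\textbf{Main obstacle.} The delicate step is Step 1 combined with the boundary analysis. The standard superposition theorem requires finite mass, which holds here, and a vector field defined and bounded on a neighborhood, which also holds. The real issue is ensuring that the represented curves do not continue past $w=0$ and do not stay there. The first fix I would try is to extend $V$ to all of $\mathbb R^3$ and apply the theorem there. Then support constraints on $\nu$ together with the transversality of $V$ at $(\pm1,0)$ show that each curve ends exactly upon first reaching $w=0$. A further point is measurability of the map $(\Theta,\xi)\mapsto(\tau,\gamma)$; I expect it to be routine, since the map is continuous on the space of curves.
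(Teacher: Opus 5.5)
\textbf{Gap in Step~1.} Your route differs from the paper's. You invoke a general superposition theorem and then analyze the boundary. The paper instead builds the decomposition by hand: a uniform artificial exit time $S$, then concentration of $\nu$ and $\eta_T$ on the set reachable from $\spt\eta_0$ via the test function $\phi=\int_0^S h\circ\Phi_\theta\,d\theta$, then characteristic coordinates $(B,\Theta)$ that reduce \eqref{eq:compactified_liouville} to transport in $\theta$.

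Your alternative is legitimate in principle, but Step~1 as written has a real gap for $d\ge3$. Finite mass and a bounded Lipschitz field do not, for a general field, let you represent \emph{all} of $\nu$ by curves issuing from $\eta_0$. Any nonnegative $\mu$ with $\int V\cdot\nabla\phi\,d\mu=0$ for all $\phi$ can be added to $\nu$ without changing \eqref{eq:compactified_liouville}; examples are an invariant measure of the flow, mass at a zero of $V$, or mass on a periodic orbit. In Smirnov's decomposition such a part shows up as a cyclic remainder that no curve from $\eta_0$ carries.

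The stopped-process results used in the occupation-measure and peak-estimation literature exclude this because the time component of the field is $1$, or at least bounded below by a positive constant. Testing with $\phi=t$ then kills any such $\mu$. That is why your Step~1 is fine for $d\le2$, where $\rho=0$. For $d\ge3$ the time component is $w^{d-2}$, which vanishes on $\{w=0\}$, so the hypothesis behind the cited theorems fails exactly there. Your Step~2 only analyzes curves the decomposition has already produced, so it presupposes the missing point. The paper's exit-time and concentration paragraph is what supplies it.

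\textbf{How to repair it.} The fix is short and uses the transversality you already found.
\begin{itemize}
\item Suppose $\mu\ge0$ on $\mathcal K$ satisfies $\int V\cdot\nabla\phi\,d\mu=0$ for all $\phi$. Testing with $\phi=t$ gives $\int w^\rho\,d\mu=0$, so $\mu$ is carried by $[0,T]\times\{(\pm1,0)\}$.
\item Testing with $\phi=w(1\pm z)/2$ gives $F_w(\pm1,0)\,\mu(\{z=\pm1\})=0$. Here $F_w(\pm1,0)=-a_d(\pm1)^{q-1+d}\neq0$, with $a_d$ the leading coefficient of $f$. Hence $\mu=0$.
\item Together with $V\neq0$ on $\mathcal K$, this makes $V\nu$ an acyclic normal current, so the acyclic Smirnov (Paolini--Stepanov) decomposition into integral curves applies.
\item You still need to add constant curves carrying the common part $\eta_0\wedge\eta_T$, which lives on $t=0$, because it cancels in $\eta_T-\eta_0$.
\item Drop the Step~1 claim that $(\pm1,0)$ are equilibria and that $w\ge0$ is preserved: for $d\ge3$ neither holds. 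You do not need invariance anyway, because the acyclic decomposition already places the curves in $\spt\nu\subset\mathcal K$.
\end{itemize}
With these additions your argument becomes a valid and more general alternative to the paper's. It trades the inverse-function-theorem construction for heavier current-decomposition machinery.
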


\begin{proof}
For \(d\leq2\), the result follows from \cite[Theorem~1]{sehnalova2026infinite}, since \(\rho=0\). We therefore assume \(d\geq3\).

Write \(Y=(t,z,w)\), and let
\[
H(Y):=(w^\rho,F_z(z,w),F_w(z,w)).
\]
For \(Y\in\mathcal K\), denote by \(\Phi_\theta(Y)\) the solution at artificial time \(\theta\) of system \eqref{eq:compactified_artificial_time_system}, with initial condition \(Y\).

Since \(\mathcal B_q\) can be smoothly parametrized by a closed interval and the compactified scalar vector field is nonzero at its endpoints, \(H\) admits a smooth, globally Lipschitz extension whose solutions cannot return to \(\mathcal K\) after leaving it. Indeed, the scalar vector field can be extended beyond the endpoints without changing its sign, while \(dt/d\theta=w^\rho\geq0\) inside \(\mathcal K\).

Every solution starting in \(\mathcal K\) leaves it in finite artificial time. Otherwise, its scalar state coordinate would converge to an equilibrium in \(w>0\), implying that \(dt/d\theta=w^\rho\) is eventually bounded below by a positive constant, contradicting \(t\leq T\).

By compactness and the impossibility of return, there exists \(S<\infty\) such that
\begin{equation}
\label{eq:superposition_uniform_exit}
\Phi_S(\mathcal K)\cap\mathcal K=\varnothing.
\end{equation}
We choose \(S>C\).

\paragraph{Concentration of the measures}
Let
\[
\mathcal K_0:=\operatorname{spt}\eta_0
\subset\{0\}\times\mathcal Y_0
\]
be the support of the initial measure. For \(Y_0\in \mathcal K_0\), define its first exit time from \(\mathcal K\) by
\[
\ell(Y_0)
:=
\inf\{\theta\geq0:\Phi_\theta(Y_0)\notin\mathcal K\}.
\]
From Lemma~\ref{lem:uniform_integrability} and the relation
\(d\theta=w^{-\rho}dt\) we get
\(
\ell(Y_0)\leq C,
\)
including trajectories reaching \(w=0\), for which the bound follows by taking the limit toward the escape time.

Define
\[
Q:=
\{\Phi_\theta(Y_0) :
Y_0\in \mathcal K_0,\ 0\leq\theta\leq S\}, \qquad \mathcal R:=Q\cap\mathcal K.
\]
Thus, \(\mathcal R\) is the set of all points reached by solutions starting in \(\mathcal K_0\), before they leave \(\mathcal K\). We show that both \(\nu\) and \(\eta_T\) are concentrated on this set.

Fix an arbitrary point \(Y_*\in\mathcal K\setminus\mathcal R\). Since \(Q\) and \(\Phi_S(\mathcal K)\) are compact and neither contains \(Y_*\), we can choose a nonnegative smooth function \(h\), positive at \(Y_*\), whose support is disjoint from both sets. Define
\[
\phi(Y):=\int_0^S h(\Phi_\theta(Y))\,d\theta.
\]
By construction, \(\phi\geq0\) and \(\phi=0\) on \(\mathcal K_0\). Moreover, differentiation along the solutions gives
\[
H\cdot\nabla\phi(Y)
=h(\Phi_S(Y))-h(Y)=-h(Y)
\]
for $Y\in\mathcal K$. Substituting $\phi$ into \eqref{eq:compactified_liouville} yields
\[
\int_{\mathcal K}h\,d\nu+
\int_{\mathcal K}\phi\,d\eta_T=0.
\]
Both integrals are nonnegative and therefore vanish. Since \(h(Y_*)>0\) and \(\phi(Y_*)>0\), both measures assign zero mass to some neighborhood of \(Y_*\). Applying this argument to every point outside \(\mathcal R\), and using a countable covering, gives
\begin{equation}
\label{eq:superposition_concentration}
\nu(\mathcal K\setminus\mathcal R)
=
\eta_T(\mathcal K\setminus\mathcal R)=0.
\end{equation}
Hence, all occupation and terminal mass is supported on solutions originating from the prescribed initial set.

\paragraph{Coordinates along the solutions}
We now describe each point reached from the initial set by its initial state and elapsed artificial time. Let
\[
\mathcal K_0^y:=
\{y\in\mathcal Y_0:(0,y)\in \mathcal K_0\}
\]
and define
\[
\mathcal F(y,\theta):=\Phi_\theta(0,y),
\]
which gives the position \((t,z,w)\) of the solution starting from \((0,y)\) after artificial time \(\theta\). We consider this map on the compact set
\[
\mathcal D:=
\{(y,\theta)\in \mathcal K_0^y\times[0,C]:
\mathcal F(y,\theta)\in\mathcal K\}.
\]
By construction, \(\mathcal F(\mathcal D)=\mathcal R\).

The map \(\mathcal F\) is injective on \(\mathcal D\). Indeed, two solutions cannot intersect unless they are the same solution, by uniqueness. Moreover, a solution starting at \(t=0,w>0\) cannot return to \(t=0\), because its physical-time coordinate is strictly increasing before leaving \(\mathcal K\).

The derivative of \(\mathcal F\) is also invertible. At \(\theta=0\), changing \(y\) moves the initial point along \(t=0\), whereas changing \(\theta\) changes its time coordinate at the positive rate \(w^\rho\). These two directions are therefore linearly independent, and they remain so under the smooth flow.

By the inverse function theorem, injectivity and compactness imply that \(\mathcal F\) has a smooth inverse on an open neighborhood \(U\) of \(\mathcal R\) in the extended space-time surface. Indeed, otherwise distinct pairs arbitrarily close to \(\mathcal D\) would have identical images; taking convergent subsequences would contradict either injectivity on \(\mathcal D\) or local invertibility.

Write
\[
\mathcal F^{-1}(Y)=(B(Y),\Theta(Y)).
\]
Here \(B(Y)\) gives the initial state of the solution passing through \(Y\), while \(\Theta(Y)\) gives its elapsed artificial time. By construction,
\begin{equation}
\label{eq:superposition_characteristic_coords}
H\cdot\nabla B=0,
\qquad
H\cdot\nabla\Theta=1,
\qquad
\Theta(0,y)=0.
\end{equation}
The neighborhoods and derivatives above are understood in a smooth two-dimensional extension of \(\mathcal K\); the resulting continuously differentiable test functions admit extensions to a neighborhood in \(\mathbb R^3\), as required by the continuity equation.

\paragraph{Construction of the superposition}
Choose a smooth function \(\chi\), equal to one near \(\mathcal R\) and with compact support in \(U\). For any smooth function \(g(y,\theta)\) defined near \(\mathcal D\), use the test function
\[
\phi(Y)=\chi(Y)g(B(Y),\Theta(Y)),
\]
extended by zero outside \(U\). On \(\mathcal R\), equation~\eqref{eq:superposition_characteristic_coords} gives
\[
H\cdot\nabla\phi
=
\partial_\theta g(B(Y),\Theta(Y)).
\]

Define the measures
\[
\widehat\nu:=(B,\Theta)_\#\nu,
\qquad
\widehat\eta_T:=(B,\Theta)_\#\eta_T,
\]
and let \(\mu_0\) be the projection of \(\eta_0\) onto the initial state space \(\mathcal K_0^y\). Since all three measures are supported on \(\mathcal R\), the continuity equation becomes
\begin{equation}
\label{eq:superposition_coordinate_liouville}
\begin{aligned}
&\int_{\mathcal D}\partial_\theta g(y,\theta)\,
d\widehat\nu = \int_{\mathcal D}g(y,\theta)\,d\widehat\eta_T
-\int_{\mathcal K_0^y}g(y,0)\,d\mu_0.
\end{aligned}
\end{equation}
This transformation simplifies the continuity equation to differentiation with respect to artificial time $\theta$.

First, choose \(g(y,\theta)=u(y)\), where \(u\) is arbitrary and smooth. Equation~\eqref{eq:superposition_coordinate_liouville} then gives
\begin{equation}
\label{eq:superposition_initial_marginal}
(\pi_y)_\#\widehat\eta_T=\mu_0,
\end{equation}
where \(\pi_y(y,\theta)=y\) denotes projection onto the initial state.

Next, for any smooth function \(h\) on \(\mathcal K\), extended smoothly to a neighborhood, choose
\[
g(y,\theta)
=
\int_0^\theta h(\Phi_v(0,y))\,dv.
\]
Since \(g(y,0)=0\) and
\(
\partial_\theta g(y,\theta)
=
h(\Phi_\theta(0,y)),
\)
equation~\eqref{eq:superposition_coordinate_liouville} yields

\begin{equation}
\label{eq:superposition_occupation_identity}
\int_{\mathcal K}h\,d\nu
=
\int_{\mathcal D}
\left(
\int_0^\theta h(\Phi_v(0,y))\,dv
\right)
d\widehat\eta_T(y,\theta).
\end{equation}
Equality of finite Radon measures extends~\eqref{eq:superposition_occupation_identity} to every bounded Borel function \(h\).

For each \((y,\theta)\in \mathcal D\), consider the solution
\[
v\longmapsto\Phi_v(0,y),
\qquad 0\leq v\leq\theta,
\]
stopped at artificial time \(\theta\). Pushing $\widehat\eta_T$ forward under the map assigning to each $(y,\theta)\in\mathcal D$ the corresponding stopped solution gives a measure $\widehat{\boldsymbol\eta}$ on stopped solutions.

Testing \eqref{eq:compactified_liouville} with \(\phi=1\) gives
\(
\operatorname{mass}(\eta_T)
=
\operatorname{mass}(\eta_0)=1,
\)
so \(\widehat{\boldsymbol\eta}\) is a probability measure. Its initial distribution is \(\eta_0\) by~\eqref{eq:superposition_initial_marginal}, its terminal distribution is \(\eta_T\) by construction, and its occupation measure is \(\nu\) by~\eqref{eq:superposition_occupation_identity}. In particular, taking \(h=1\) gives
\(
\operatorname{mass}(\nu)
=
\int_{\mathcal D}\theta\,d\widehat\eta_T(y,\theta)
\leq C,
\)
consistently with the prescribed mass bound.

\paragraph{Return to the original time}
Every represented solution starts with \(w>0\) and remains there until possibly reaching \(w=0\) at its stopping point. Therefore,
\(
\frac{dt}{d\theta}=w^\rho>0
\)
before stopping, so the solution can be reparametrized by the original time \(t\). Denoting the resulting compactified trajectory by \(\gamma(t)\), we have
\[
\dot\gamma(t)
=
w_\gamma(t)^{-\rho}F(\gamma(t)),
\qquad
d\theta=w_\gamma(t)^{-\rho}dt,
\]

where $F \defeq (F_z, F_w)$. The reparametrization is measurable by continuity of the flow and measurability of the inverse time change. Let \(P\) map each stopped artificial-time solution to its reparametrization by the original time. Then \(\boldsymbol\eta:=P_\#\widehat{\boldsymbol\eta}\) is a probability measure on stopped physical-time solutions.

The reparametrization preserves the initial and terminal points, so
\[
\eta_0=(e_0)_\#\boldsymbol\eta,
\qquad
\eta_T=(e_T)_\#\boldsymbol\eta.
\]

Finally, the change of variables in~\eqref{eq:superposition_occupation_identity} gives
\[
\int_{\mathcal K}h\,d\nu
=
\int
\left(
\int_0^\tau
h(t,\gamma(t))w_\gamma(t)^{-\rho}\,dt
\right)
d\boldsymbol\eta(\tau,\gamma),
\]
where the inner integral is understood up to the stopping endpoint when \(w_\gamma(\tau)=0\). This proves the proposition.
\end{proof}

% END SUPERPOSITION------------------------

All coefficients in \eqref{eq:compactified_liouville} are polynomial. We therefore obtain the measure problem with polynomial data
\begin{align}
\label{problem_measure_compact}
    p_{\mathrm c}^*
    :={}&
    \sup_{\eta_0,\nu,\eta_T}
    \int z^q\,d\eta_T
    \\
    \mathrm{s.t.}\quad
    &\eqref{eq:compactified_liouville}
    \text{ holds for all }\phi\in\mathbb R[t,z,w],
    \notag\\
    &\eta_0\in\mathcal M_+(\{0\}\times\mathcal Y_0),
    \qquad
    \mass(\eta_0)=1,
    \notag\\
    &\nu,\eta_T
    \in\mathcal M_+([0,T]\times\mathcal B_q), \qquad \mass(\nu)\leq C.
    \notag
\end{align}
The use of polynomial test functions is justified, because polynomials are dense in continuously differentiable functions on compact sets.

\begin{theorem}
\label{thm:no_relaxation_gap}
There is no relaxation gap between
\eqref{problem_state_bounded_cost} and
\eqref{problem_measure_compact}, that is,
\[
    p_{\mathrm c}^*=p^*.
\]
\end{theorem}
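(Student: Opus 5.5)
We prove the two inequalities $p_{\mathrm c}^*\geq p^*$ and $p_{\mathrm c}^*\leq p^*$ separately. The first comes from the trajectory measures of Definition~\ref{def:compactified_measures}. The second uses the superposition principle, Proposition~\ref{prop:compactified_superposition}, together with a limiting argument for trajectories stopped at $w=0$.

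\emph{Lower bound.} Take any feasible pair $(x_0,\tau)$ of \eqref{problem_state_bounded_cost}, so $x_0\in\mathcal X_0$, $0\le\tau\le T$ and $\tau<t_{\max}(x_0)$. Let $(\eta_0,\nu,\eta_T)$ be the associated initial, weighted occupation and peak measures.
\begin{itemize}
\item $\eta_0$ is a Dirac mass on $\{0\}\times\mathcal Y_0$, because $\mathcal Y_0$ is the image of $\mathcal X_0$.
\item $\nu$ and $\eta_T$ are supported in $\mathcal K$.
\item $\mass(\nu)\le C$ by \eqref{eq:uniform_weighted_mass_bound}.
\item The continuity equation \eqref{eq:compactified_liouville} holds for every $C^1$ function $\phi$, hence for every polynomial.
\end{itemize}
The objective value is $\int z^q\,d\eta_T=z(\tau)^q=x(\tau)^q/(1+x(\tau)^q)$ by \eqref{eq:compactified_objective}. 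Taking the supremum over $(x_0,\tau)$ gives $p_{\mathrm c}^*\ge p^*$.

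\emph{Upper bound.} Take a feasible triple $(\eta_0,\nu,\eta_T)$ of \eqref{problem_measure_compact}. Polynomials are dense in $C^1$ on the compact set $\mathcal K$, and all masses are finite. So \eqref{eq:compactified_liouville} extends from polynomial to all $C^1$ test functions, and Proposition~\ref{prop:compactified_superposition} applies. It yields a probability measure $\boldsymbol\eta$ on stopped solutions $(\tau,\gamma)$ with $\eta_T=(e_T)_\#\boldsymbol\eta$. Therefore
\[
\int z^q\,d\eta_T=\int z_\gamma(\tau)^q\,d\boldsymbol\eta(\tau,\gamma)\le \sup_{(\tau,\gamma)\in\spt\boldsymbol\eta} z_\gamma(\tau)^q .
\]
It remains to show that $z_\gamma(\tau)^q\le p^*$ for every represented stopped solution. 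Each such $\gamma$ starts at a point of $\mathcal Y_0$ with $w>0$, i.e.\ at the image of some $x_0\in\mathcal X_0$.
\begin{itemize}
\item If $w_\gamma(\tau)>0$, then uniqueness of solutions in the region $w>0$, where the dynamics \eqref{eq:compactified_dynamics_homogeneous} is smooth, shows that $\gamma$ is the compactification of $x(\cdot;x_0)$ on $[0,\tau]$ with $\tau<t_{\max}(x_0)$. Hence $z_\gamma(\tau)^q$ is a feasible value of \eqref{problem_state_bounded_cost}.
\item If $w_\gamma(\tau)=0$, the trajectory first reaches infinity exactly at $\tau=t_{\max}(x_0)\le T$, and $z_\gamma(\tau)^q=1$. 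For $\tau'<\tau$, each pair $(x_0,\tau')$ is feasible, and $z_\gamma(\tau')^q\to 1$ as $\tau'\uparrow\tau$ by continuity of $\gamma$. Thus $p^*=1$, and the bound holds again.
\end{itemize}
This gives $p_{\mathrm c}^*\le p^*$.

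The main obstacle is the boundary case $w=0$, where the supremum in \eqref{problem_state_bounded_cost} is not attained and we must rely on continuity of the extension in Definition~\ref{def:compactified_measures}. A secondary subtlety is identifying the superposed curves with genuine trajectories of \eqref{eq:system}. I would handle this by noting that, before stopping, the reparametrization in the proposition satisfies $\dot\gamma=w^{-\rho}F(\gamma)$, which is exactly \eqref{eq:compactified_dynamics_homogeneous}. Mapping back by $x=z/w$ then recovers \eqref{eq:system}.
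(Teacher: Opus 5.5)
Your proposal is correct and follows the same route as the paper: trajectory-generated measures give $p_{\mathrm c}^*\geq p^*$, and Proposition~\ref{prop:compactified_superposition} gives $p_{\mathrm c}^*\leq p^*$, with endpoints at $w=0$ treated as limits of admissible objective values attained before escape. Your additional details make explicit steps the paper leaves implicit: extending the continuity equation from polynomial to $C^1$ test functions, observing that a stop at $w=0$ forces $p^*=1$, and checking that $x=z/w$ maps the reparametrized curves back to solutions of \eqref{eq:system}.
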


\begin{proof}
Every admissible trajectory in \eqref{problem_state_bounded_cost} generates a feasible triple of measures in \eqref{problem_measure_compact} with the same objective value. Hence \(p_{\mathrm c}^*\geq p^*\).

Conversely, Proposition~\ref{prop:compactified_superposition} decomposes every feasible triple in \eqref{problem_measure_compact} into stopped compactified trajectories. Trajectories terminating at \(w=0\) also have terminal objective values bounded by \(p^*\), since their terminal values are limits of objective values attained at admissible times strictly before finite time escape. The objective of the measure problem is the average of their end point values and is therefore at most \(p^*\). Thus \(p_{\mathrm c}^*\leq p^*\), which proves the claim.
\end{proof}

\section{Convergence of the moment-SOS hierarchy}
\label{sec:convergence}

We finally apply the moment-SOS hierarchy to \eqref{problem_measure_compact}. The supports of all measures are compact basic semialgebraic sets. We include redundant ball inequalities in their descriptions so that the corresponding quadratic modules are Archimedean. For a detailed description of the construction of the moment relaxations see \cite{lasserre_positive_2010}.

Let \(p_k\) denote the optimal value of the moment relaxation of problem \eqref{problem_measure_compact} using test functions of degree \(k\).

\begin{theorem}
\label{thm:moment_convergence}
The moment relaxations provide a monotone sequence of upper bounds satisfying
\begin{equation}
    p_k\downarrow p_{\mathrm c}^*
    \qquad\text{as }k\to\infty.
\end{equation}
\end{theorem}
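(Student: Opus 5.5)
We prove the statement in three steps: monotonicity, an upper-bound property, and convergence via the dual SOS hierarchy together with compactness of truncated moment sequences. For \emph{monotonicity and the upper-bound property}, note that the degree-$(k+1)$ relaxation imposes more constraints than the degree-$k$ one. These include more moment equalities from \eqref{eq:compactified_liouville} tested on more polynomials, and larger localizing and moment matrices. Hence its feasible set projects into the degree-$k$ feasible set, and $p_{k+1}\le p_k$. Any feasible triple of \eqref{problem_measure_compact} yields truncated moment sequences feasible at every order with the same objective, so $p_k\ge p_{\mathrm c}^*$. Since $0\le z^q\le1$ on $\mathcal B_q$ and $\mass(\eta_0)=1$, $\mass(\nu)\le C$, the sequence is also bounded, so $\lim_k p_k=:p_\infty\ge p_{\mathrm c}^*$ exists. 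It remains to show $p_\infty\le p_{\mathrm c}^*$.

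For \emph{bounded moments}, we first show that every feasible pseudo-moment vector at order $k$ has its moments uniformly bounded, independently of $k$. The mass of $\eta_0$ is $1$. The mass of $\nu$ is at most $C$ by the explicit constraint. Testing with $\phi=1$ in the moment equalities gives $\mass(\eta_T)=\mass(\eta_0)=1$. Because the quadratic modules are Archimedean, we include redundant ball constraints such as $N-t^2-z^2-w^2\ge0$ with $N\ge T^2+2$. A standard argument with the localizing matrix of this constraint, iterated as in Lasserre's proof, then bounds every moment of degree $\le 2k$ by $\mass\cdot N^{\mathrm{deg}/2}$. In particular, each individual moment is bounded uniformly in the relaxation order.

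For \emph{extraction of a limit}, take near-optimal solutions $y^{(k)}$ of the $k$-th relaxation with objective $\ge p_k-1/k$. Extend them by zero to infinite sequences. By the uniform bounds and a diagonal argument (Tychonoff in the product topology), a subsequence converges coordinatewise to infinite sequences $y_0,y_\nu,y_T$. These limits satisfy positive semidefiniteness of all moment and localizing matrices of every order. By Putinar's theorem together with the Archimedean property (the moment-problem version, e.g.\ \cite{lasserre_positive_2010}), each is the moment sequence of a nonnegative measure supported on the corresponding compact semialgebraic set. The linear constraints also pass to the limit: each moment equality from testing \eqref{eq:compactified_liouville} with a fixed monomial $\phi$ involves finitely many moments and holds for all large $k$. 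The mass constraint $\mass(\nu)\le C$ likewise passes to the limit. Hence the limiting triple is feasible for \eqref{problem_measure_compact}, since equality for all polynomial test functions is precisely the constraint. Its objective equals $\lim \int z^q d\eta_T^{(k)}=p_\infty$, so $p_\infty\le p_{\mathrm c}^*$.

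The main obstacle is the uniform moment bound. It is exactly here that the finite mass bound $\mass(\nu)\le C$, guaranteed by Lemma~\ref{lem:uniform_integrability}, is essential: without an a priori mass bound on $\nu$, as in the unweighted multidimensional setting, the pseudo-moments of $\nu$ could diverge and compactness would fail. I would make the argument explicit by bounding the diagonal entries of the localizing matrices using $\mass(\nu)\le C$ and the ball constraint, then using positive semidefiniteness to bound the off-diagonal entries.
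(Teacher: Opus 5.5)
Your proposal is correct and follows the paper's route: bound the masses of $\eta_0$, of $\eta_T$ (by testing with $\phi=1$) and of $\nu$ (by the imposed constraint $\mass(\nu)\le C$), use the Archimedean ball constraints to bound the pseudo-moments uniformly in $k$, and conclude $p_k\downarrow p_{\mathrm c}^*$. The difference is that you write out the monotonicity and the compactness/Putinar extraction argument, which the paper obtains by citing \cite[Corollary~8]{tacchi2022}; in that step $\mass(\nu)\le C$ enters only as an imposed constraint, and Lemma~\ref{lem:uniform_integrability} is what makes that constraint harmless for $p_{\mathrm c}^*=p^*$.
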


\begin{proof}
The measure problem \eqref{problem_measure_compact} is a generalized moment problem with polynomial data and compact Archimedean supports. The masses of
\(\eta_0\) and \(\eta_T\) are equal to one, the latter equality following from \eqref{eq:compactified_liouville} tested with \(\phi=1\). The mass of \(\nu\) is uniformly bounded for every $k$ by \(C\). Consequently, every element of the feasible moment sequence is uniformly bounded for each fixed degree. The general convergence result of \cite[Corollary~8]{tacchi2022} therefore applies
and yields \(p_k\downarrow p_{\mathrm c}^*\).
\end{proof}

\section{Numerical example}
Consider a scalar system
\begin{equation}\label{eq:scalar_ex}
    \dot x = x^3 - x.
\end{equation}
Note that this system has an unstable equilibrium point $x = 1$. We estimate the peak over a time horizon $[0, 1]$, hence, $T$ from Section \ref{sec:problem_statement} is set as $T = 1$. For an initial condition $x_0 \in [0.9, 1.2] =: \widehat{\mathcal X}_0$, there are trajectories which blow up in the time horizon $[0, 1]$, whereas for an initial condition $x_0 \in [1.01, 1.02]=: \widetilde{\mathcal X}_0$ every trajectory blows up outside of the time horizon $[0, 1]$. Hence, when we solve problem \eqref{problem_measure_compact} with a compactification of system \eqref{eq:scalar_ex} and initial set $\widehat{\mathcal X}_0$, the compactified peak value is equal to 1. Thus, when we numerically solve the moment relaxations of this problem, their values (which are true upper bounds for the compactified peak value) never drop below 1. In contrast, when we solve \eqref{problem_measure_compact} with a compactification of \eqref{eq:scalar_ex} and an initial set $\widetilde{\mathcal X}_0$, the peak value is finite and the moment relaxations give us a sequence of nonincreasing upper bounds for the true compactified peak value. The moment problem was modeled with GloptiPoly \cite{henrion_gloptipoly_2009} and solved with Mosek \cite{mosek} and the numerical values of the moment relaxations (to 5 significant digits) are shown in Table \ref{tab:numerics}. The constants used in the computations were $q = 2$, $C = 2+ \sqrt{3}$ (this bound was computed from the formula \eqref{eq:explicit_weighted_mass_bound} with $c = 1/2$ and $R = \sqrt{2}$ obtained in Lemma \ref{lem:uniform_integrability}).

\begin{table}[ht]
\centering
\caption{Numerical values of the moment relaxations with test functions of degree $k$ of the peak estimation problem \eqref{problem_measure_compact} of system \eqref{eq:scalar_ex}, where $\hat p_k$ (resp. $\tilde p_k$) denotes the upper bound for peak value with initial set $\widehat{\mathcal X}_0$ (resp. $\widetilde{\mathcal X}_0$).}
\label{tab:numerics}
\begin{tabular}{c|c|c}
     $k$ & $\hat p_k$ & $\tilde p_k$\\
     \hline
     2 & 1.0 & 1.0\\
     4 & 1.0 & 0.97052\\
     6 & 1.0 & 0.58657\\
     8 & 1.0 & 0.58375\\
     10 & 1.0 & 0.58375\\
\end{tabular}
\end{table}

We can see that the compactified peak value upper bounds $\hat p_k$ for the system starting in initial set $\widehat{\mathcal X}_0$ never drop below 1, where 1 represents infinite original peak. This is the desired behaviour, although we emphasize that getting the upper bound 1 in any moment relaxation is not a certificate of blow up. In contrast, the compactified peak value upper bounds $\tilde p_k$ for the system starting in initial set $\widetilde{\mathcal X}_0$ drop below 1. Since the computed values are upper bounds, this is a certificate of a finite peak value. The analytical solution $x(t;x_0)
    = \frac{x_0}
    {\sqrt{x_0^2-(x_0^2-1)e^{2t}}}$ of \eqref{eq:scalar_ex} gives the original peak
\(
    \approx 1.18422,
\)
attained at $x_0=1.02$ and $t=1$. The corresponding compactified peak (obtained by transformation \eqref{eq:weight_exponent}) agrees with the numerical value $0.58375$ obtained from the moment relaxations to 5 significant digits.

 \section{Conclusion}
We have shown that the particular structure of scalar polynomial systems allows us to establish a convergent occupation measure based peak estimation method even in the presence of finite time blow up. The integrability of scalar trajectories ensures the finite mass of the measures required by the compactified formulation. Together with the superposition principle for the weighted continuity equation established in this work, this allows us to prove the absence of a relaxation gap and convergence of the moment-SOS hierarchy.

The main message we want to communicate in this work is the remarkable correspondence between the integrability exponent of scalar trajectories and the exponent required to recover polynomial dynamics after compactification. While our results rely essentially on the scalar structure, investigating whether analogous properties can be identified for broader classes of dynamical systems remains an interesting direction for future research.

\section{Acknowledgments}
 The author would like to thank Didier Henrion, Milan Korda and Martin Kru\v z\'{\i}k for discussions regarding this work. The author acknowledges the use of AI for assistance with brainstorming ideas, mathematical development, coding and drafting the manuscript. The final content, analysis and conclusions remain the sole responsibility of the author.

\bibliographystyle{IEEEtran}
\bibliography{references}

\end{document}